\newtheorem{theorem}{Theorem}[section]
\newtheorem{lemma}[theorem]{Lemma}
\newtheorem{corollary}[theorem]{Corollary}
\newtheorem{remark}[theorem]{Remark}
\numberwithin{equation}{section}
\def\({\bigl(}
\def\){\bigr)}
\begin{document}
\title
{On the number of linear multipartite hypergraphs with given size
}

\author
{{Fang Tian}\\
{\small Department of Applied Mathematics}\\
{\small Shanghai University of Finance and Economics, Shanghai, 200433, China}\\
{\small\tt tianf@mail.shufe.edu.cn}
}

\date{}
 \maketitle
\begin{abstract}
For any given integer $r\geqslant 3$, let  $k=k(n)$ be an integer
with $r\leqslant k\leqslant n$.  A hypergraph is $r$-uniform
 if each edge is a set of $r$ vertices, and is said to be linear if
 two edges intersect in at most one vertex. Let $A_1,\ldots,A_k$ be a
 given $k$-partition of $[n]$ with $|A_i|=n_i\geqslant 1$. An
 $r$-uniform hypergraph $H$ is called {\it $k$-partite} if each edge
 $e$ satisfies $|e\cap A_i|\leqslant 1$ for  $1\leqslant i\leqslant k$.
 In this paper, the number of linear $k$-partite $r$-uniform hypergraphs
 on $n\to\infty$ vertices is determined asymptotically when the number of
 edges is $m(n)=o(n^{\frac{4}{3}})$. For  $k=n$, it is the  number of
 linear $r$-uniform hypergraphs on vertex set $[n]$ with $m=o(n^{ \frac{4}{3}})$ edges.
\end{abstract}

\vskip0.4cm \hskip 10pt {\bf Keywords:}\quad asymptotic enumeration,
linear  hypergraph, multipartite hypergraph, switching method.

\hskip 10pt {\bf Mathematics Subject Classifications:}\ 05A16

\date{}
 \maketitle

\section{Introduction}\label{s:1}

For any given integer $r\geqslant 3$, a hypergraph ${H}$ on vertex set $[n]$ is an
\textit{$r$-uniform hypergraph} (\textit{$r$-graph} for short) if each edge is a set
of  $r$ vertices, and is said to be a \textit{linear hypergraph}  if two edges
intersect in at most one vertex. Little is known about the number of
 distinct linear hypergraphs. An asymptotic enumeration formula for
 the logarithm of the number of linear hypergraphs
 on $n\to\infty$ vertices is obtained by Grable and Phelps~\cite{grable96}. They also obtained
 the logarithm of the number of  partial Steiner $(n,r,\ell)$-systems with $2\leqslant\ell\leqslant r-1$,
 where a partial Steiner $(n,r,\ell)$-system is an $r$-graph $H$ such that
 every subset of size $\ell$ lies in at most one edge of $H$; the $(n,r,2)$-systems
 are linear hypergraphs. Asratian and Kuzjurin~\cite{asas00} gave another proof.
 Blinovsky and Greenhill~\cite{vlaejoc,valelec} used the switching method
 to obtain the asymptotic number of sparse uniform and linear
uniform hypergraphs with given order and degree sequence.
Balogh and Li~\cite{balgoh17} obtained an upper bound on the
 number of  linear uniform hypergraphs with given order and girth.

It is interesting to consider the enumeration of  linear hypergraphs with
given size. Let $\mathcal{H}_r(n,m)$ denote the set of $r$-graphs on
 vertex set $[n]$ with $m$  edges. Let   $\mathcal{L}_r(n,m)$ denote
 the set of  linear hypergraphs in $\mathcal{H}_r(n,m)$. The previous
 works most relevant to this one are \cite{hashe20,mckay18}.
Hasheminezhad and McKay~\cite{hashe20} obtained the asymptotic number of linear hypergraphs
with a given number of edges of each size, assuming
a constant bound on the edge size and $o(n^{\frac{4}{3}})$ edges.
McKay and Tian~\cite{mckay18}  obtained the asymptotic enumeration
formula for the set of $\mathcal{L}_r(n,m)$ as far as $m=o(n^{ \frac{3}{2}})$.
 Let $[x]_t=x(x-1)\cdots(x-t+1)$ be the falling factorial.
The standard asymptotic notations $o$ and $O$ refer to $n\to\infty$.
The floor and ceiling signs are omitted whenever they are not crucial.

Let $s$ and $k=k(n)$ be integers with
$1\leqslant s\leqslant r\leqslant k\leqslant n$, and  $N_s$ be an abbreviation
 for $ \binom{n}{s}$. Let $A_1,\ldots,A_k$ be a given $k$-partition
of $[n]$ with $|A_i|=n_i\geqslant 1$, ${\bf \vec{n}}=(n_1,\ldots,n_k)$ and $\sigma_{s}({\bf \vec{n}})
=\sum_{1\leqslant i_1<\cdots<i_s\leqslant k}n_{i_1}\cdots n_{i_{s}}$
be the $s$-th elementary symmetric function of ${\bf \vec{n}}$. We use $A_{i_1}\cdots A_{i_s}$
to denote the set of $s$-sets $F_s$ of $[n]$ such that $|F_s \cap A_{i_j}|= 1$
for all $1\leqslant j\leqslant s$, and $\mathcal{E}_s({\bf \vec{n}})=\bigcup_{1\leqslant i_1<\cdots<i_s\leqslant k}
 A_{i_1}\cdots A_{i_s}$  for all $1\leqslant s\leqslant r$.
 An $r$-graph $H$ is called {\it $k$-partite} if each edge $e$ satisfies
$e\in \mathcal{E}_r({\bf \vec{n}})$. Let $\mathcal{H}_r({\bf \vec{n}},m)$
 denote the set of $k$-partite $r$-graphs with $m$  edges and with vertex partition
 determined by ${\bf \vec{n}}$, and let $\mathcal{L}_r({\bf \vec{n}},m)$ denote the set of all
 linear hypergraphs in $\mathcal{H}_r({\bf \vec{n}},m)$.
In this paper, we obtain an asymptotic enumeration formula for
$|\mathcal{L}_r({\bf \vec{n}},m)|$ as far as $m=o(n^{ \frac{4}{3}})$.

\begin{theorem}\label{t1.7}
 For a fixed integer $r\geqslant 3$, let $s$ and $k=k(n)$ be integers with
$1\leqslant s\leqslant r\leqslant k\leqslant n$,
and let $m=m(n)$ be an integer with
$m=o(n^{ \frac{4}{3}})$. Let ${\bf \vec{n}}=(n_1,\ldots,n_k)$ and
$\sigma_{s}({\bf \vec{n}})=\sum_{1\leqslant i_1<\cdots<i_s\leqslant k}n_{i_1}\cdots n_{i_{s}}$
be the $s$-th elementary symmetric function of ${\bf \vec{n}}$.
 Suppose that there exists a constant $C>0$ such that  $\sum_{i=1}^k \frac{1}{n_i}\leqslant C \frac{k^2}{n}$.
Then, as $n\rightarrow \infty$
\begin{align*}
|\mathcal{L}_r({\bf \vec{n}},m)|={ \frac{ \sigma_{r}^m({\bf \vec{n}})}{m!}}
\exp\biggl[- \frac{\sigma_{2}({\bf \vec{n}})
\sigma_{r-2}^2({\bf \vec{n}})[m]_2}{2\sigma_{r}^2({\bf \vec{n}})}+
O\biggl( \frac{m^2}{n^{3}}+ \frac{m^3}{n^{4}}\biggr)\biggr].
\end{align*}
\end{theorem}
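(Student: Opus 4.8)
My plan is to apply the switching method, comparing $|\mathcal L_r(\vec n,m)|$ with the total number of $k$-partite $r$-graphs with $m$ edges. Write $\sigma_s=\sigma_s(\vec n)$. Since such a hypergraph is just a choice of $m$ distinct edges among the $\sigma_r$ admissible ones, $|\mathcal H_r(\vec n,m)|=\binom{\sigma_r}{m}=\frac{\sigma_r^m}{m!}\exp\!\bigl(O(m^2/\sigma_r)\bigr)$, and since the balance hypothesis keeps $\sigma_r$ of order at least $n^3$ when $r\ge 3$, the conversion error is absorbed into $O(m^2/n^3)$. Linearity fails precisely at a \emph{bad pair}, i.e.\ two edges meeting in at least two vertices, and for $m=o(n^{4/3})$ the dominant bad pairs meet in exactly two vertices. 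A first-moment heuristic predicts the answer: the number of unordered pairs of admissible edges sharing a fixed $2$-set $F_2\in\mathcal E_2(\vec n)$ is $\tfrac12\codeg(F_2)^2(1+o(1))$, where $\codeg(F_2)$ denotes the number of edges through $F_2$; choosing the $m$ edges uniformly, each such pair survives with probability $[m]_2/[\sigma_r]_2$, so the expected number of bad pairs is $\mu:=\frac{[m]_2}{2\sigma_r^2}\sum_{F_2\in\mathcal E_2}\codeg(F_2)^2\,(1+o(1))$. If bad pairs were asymptotically Poisson, the probability of linearity would be $e^{-\mu}$, which is the claimed exponential factor.

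To make this rigorous I would stratify by the number of bad pairs. Let $\mathcal A_j$ be the set of $k$-partite $r$-graphs with $m$ edges and exactly $j$ bad pairs, so that $\mathcal A_0=\mathcal L_r(\vec n,m)$ and $\sum_{j\ge0}|\mathcal A_j|=\binom{\sigma_r}{m}$. I would use the switching that destroys one bad pair: given edges $e,f$ with $e\cap f=\{u,v\}$ and $u\in A_a$, replace $u$ in $f$ by a fresh vertex $w\in A_a\setminus\{u\}$, producing $f'=(f\setminus\{u\})\cup\{w\}$ and hence a member of $\mathcal A_{j-1}$, the move being valid whenever $f'$ is not already an edge and introduces no new bad pair. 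Counting forward switchings out of a fixed $H\in\mathcal A_j$ gives, to leading order, $j$ times the average number of admissible replacement vertices, while counting reverse switchings into a fixed $H'\in\mathcal A_{j-1}$ reduces to choosing a target edge, an admissible $2$-subset of it, and a second edge through one of those two vertices. The double-counting identity $|\mathcal A_j|\cdot(\text{forward})=|\mathcal A_{j-1}|\cdot(\text{backward})$ then yields $|\mathcal A_j|/|\mathcal A_{j-1}|=\mu/j\,(1+o(1))$, uniformly over $0\le j=O(\mu)$.

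Granting this ratio, the recursion forces $|\mathcal A_j|=|\mathcal A_0|\,\mu^{j}/j!\,(1+\varepsilon_j)$ with controlled $\varepsilon_j$, so that $\binom{\sigma_r}{m}=\sum_{j\ge0}|\mathcal A_j|=|\mathcal A_0|\,e^{\mu}(1+o(1))$ and hence $|\mathcal A_0|=\binom{\sigma_r}{m}\,e^{-\mu}(1+o(1))$. It then remains to put $\mu$ in closed form. Since $\codeg(F_2)$ equals the elementary symmetric function $\sigma_{r-2}$ of the part sizes with the two parts meeting $F_2$ removed, a Taylor expansion controlled by the hypothesis $\sum_i 1/n_i\le Ck^2/n$ gives $\sum_{F_2}\codeg(F_2)^2=\sigma_2\sigma_{r-2}^2(1+o(1))$, whence $\mu=\frac{\sigma_2\sigma_{r-2}^2[m]_2}{2\sigma_r^2}(1+o(1))$. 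Substituting this together with $\binom{\sigma_r}{m}=\frac{\sigma_r^m}{m!}\exp(O(m^2/\sigma_r))$ reproduces the stated formula, provided every accumulated error collapses into $O(m^2/n^3+m^3/n^4)$.

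The hard part is the uniform error control in the switching step, where three effects must be tamed. First, the \emph{invalid} forward and backward switchings --- those recreating an existing edge or spawning a secondary bad pair --- must be shown to be a negligible fraction of all switchings; this is where the balance hypothesis $\sum_i 1/n_i\le Ck^2/n$ is used, to bound the relevant codegrees and per-part replacement counts. Second, the higher-order obstructions --- edges meeting in at least three vertices and clusters of three or more mutually overlapping edges --- must contribute only to the error, and it is exactly these that the threshold $m=o(n^{4/3})$ suppresses, the critical term $m^3/n^4$ becoming of order one precisely when $m$ is of order $n^{4/3}$. Third, because $\mu$ itself may tend to infinity, the ratio $\mu/j\,(1+o(1))$ must hold uniformly while $j$ ranges up to order $\mu$, so that the summation reproduces $e^{-\mu}$ rather than only its first terms. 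Assembling these estimates and checking that their aggregate is $O(m^2/n^3+m^3/n^4)$ is the crux of the argument.
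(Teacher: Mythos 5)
Your overall strategy (compare with $\binom{\sigma_r}{m}$, stratify by the amount of non-linearity, relate adjacent strata by switchings, and sum the resulting series) is exactly the paper's, but two of your concrete choices would fail as stated. First, you stratify all of $\mathcal{H}_r({\bf \vec{n}},m)$ by the number of bad pairs and switch by replacing one shared vertex $u\in A_a$ with a fresh $w\in A_a\setminus\{u\}$. In the multipartite setting of the theorem this switching is not always available and its count is not uniform over a stratum: parts of size $1$ are allowed by the hypothesis (the paper's own examples have $n_i=1$ for most $i$), so a bad pair both of whose shared vertices lie in singleton parts admits no such replacement at all; more generally the number of replacements is $n_a-1$, which varies from $0$ to $\Theta(n)$ depending on which part $u$ lies in, so the forward count out of $H\in\mathcal A_j$ is not ``$j$ times an average'' in any sense uniform enough to extract the ratio $|\mathcal A_j|/|\mathcal A_{j-1}|$ from the double-counting identity. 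Likewise the backward count depends on the number of pairs of edges of $H'$ meeting in exactly one vertex, a quantity that fluctuates over $\mathcal A_{j-1}$. The paper avoids both problems by deleting and re-inserting \emph{whole edges}, so that all switching counts are expressed through the global quantities $\sigma_2,\sigma_{r-2},\sigma_r$ up to relative errors $O(1/n+m/n^2)$. Second, without first discarding the hypergraphs containing a $\geqslant 3$-vertex overlap or a cluster of three or more mutually linked edges (an event of probability $O(m^2/n^3+m^3/n^4)$, established via Lemma~\ref{l2.5} before any switching is performed), a single vertex replacement can create or destroy several bad pairs at once, so your operation does not map $\mathcal A_j$ into $\mathcal A_{j-1}$ and the identity $|\mathcal A_j|\cdot(\text{forward})=|\mathcal A_{j-1}|\cdot(\text{backward})$ is not the right bookkeeping.

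Two further points you flag as ``the crux'' but do not resolve, and which the paper handles with specific tools. The index $j$ ranges up to $\binom{m}{2}$, not merely $O(\mu)$; you need a tail bound showing that strata with many bad pairs are negligible, which the paper obtains by capping the number of clusters at $M=\bigl\lceil\log n+56\sigma_{r-2}^2({\bf \vec{n}})\sigma_2({\bf \vec{n}})m^2/\sigma_{r}^2({\bf \vec{n}})\bigr\rceil$ together with a first-moment estimate. And since $\mu$ may tend to infinity, per-step factors of the form $1+o(1)$ are insufficient: one needs per-step relative errors $O(1/n+m/n^2)$ and then a summation lemma converting them into the multiplicative error $\exp[O(m^2/n^3+m^3/n^4)]$; the paper invokes Lemma~\ref{l2.6} (Greenhill--McKay--Wang) for exactly this purpose. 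Your closed form for $\mu$ is correct, but as written the argument would at best deliver $|\mathcal L_r({\bf \vec{n}},m)|=\frac{\sigma_r^m({\bf \vec{n}})}{m!}e^{-\mu}(1+o(1))$, and only after the switching itself is repaired.
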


Note that if there exists
a constant $c>0$ such that  $n_i\geqslant c \frac{n}{k}$ for $1\leqslant i\leqslant k$,
Theorem~\ref{t1.7}  holds. Also, for example, if $n_1=\ldots=n_p=n^{ \frac{1}{2}}$ for some
positive constant $p<k$, $n_{p+1}=\ldots=n_k=1$,
then $k=n+p-pn^{ \frac{1}{2}}$ and Theorem~\ref{t1.7}  holds; if
$n_1=\ldots=n_\ell=c'n$, $n_{\ell+1}=\ldots=n_k=1$
for some positive constants $\ell<k$ and $c'$ such that $c'\ell<1$, then
$k=(1- c'\ell) n+\ell$ and Theorem~\ref{t1.7}  holds.  For $n$ sufficiently large,
many cases satisfy
 $\sum_{i=1}^k \frac{1}{n_i}\leqslant C \frac{k^2}{n}$ for some
 constant $C>0$.
In particular,
for  $k=n$,  $k$-partite $r$-graphs are general $r$-graphs,
$\sigma_{2}({\bf \vec{n}})= N_2$, $\sigma_{r-2}({\bf \vec{n}})= N_{r-2}$
and $\sigma_{r}({\bf \vec{n}})= N_r$. We have the following corollary
on the  number of linear $r$-graphs on $[n]$ with $m=o(n^{ \frac{4}{3}})$
edges, which coincides with the uniform case in~\cite{hashe20} and is a subcase
in~\cite{mckay18}.

 \begin{corollary}\label{c11.2}
For any fixed integer $r\geqslant 3$, let  $m=m(n)$ be an integer
with $m=o(n^{ \frac{4}{3}})$. Then, as $n\rightarrow \infty$,
\begin{align*}
|\mathcal{L}_r(n,m)|
={ \frac{ N_r^m}{m!}}
\exp\biggl[- \frac{[r]_2^2[m]_2}{4n^2}+O\biggl( \frac{m^2}{n^{3}}+ \frac{m^3}{n^{4}}\biggr)\biggr].
\end{align*}
\end{corollary}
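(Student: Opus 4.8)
The plan is to write $|\mathcal{L}_r({\bf \vec n},m)|$ as the total count $|\mathcal{H}_r({\bf \vec n},m)|$ times the proportion of those hypergraphs that are linear, and to estimate the proportion by the switching method. Write $\sigma_j=\sigma_j({\bf \vec n})$ for short. Since a $k$-partite $r$-graph with $m$ edges is exactly an $m$-subset of the set $\mathcal{E}_r({\bf \vec n})$ of $|\mathcal{E}_r({\bf \vec n})|=\sigma_r$ admissible edges, we have $|\mathcal{H}_r({\bf \vec n},m)|=\binom{\sigma_r}{m}$. Expanding $[\sigma_r]_m=\sigma_r^m\prod_{i=0}^{m-1}(1-i/\sigma_r)$ and using $\sigma_r=\Omega(n^3)$ (which one checks follows for $r\geqslant3$ from the hypothesis $\sum_i 1/n_i\leqslant Ck^2/n$, forcing either many parts or large parts) gives
\[
|\mathcal{H}_r({\bf \vec n},m)|=\frac{\sigma_r^m}{m!}\exp\bigl(O(m^2/n^3)\bigr),
\]
so this factor is already absorbed into the stated error. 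It then remains to prove that the linear proportion is $\exp\bigl(-\mu+O(m^2/n^3+m^3/n^4)\bigr)$ with $\mu=\sigma_2\sigma_{r-2}^2[m]_2/(2\sigma_r^2)$.

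Call a pair of edges \emph{bad} if they meet in at least two vertices, and let $\mathcal{C}_t\subseteq\mathcal{H}_r({\bf \vec n},m)$ consist of the hypergraphs with exactly $t$ bad pairs, so that $\mathcal{C}_0=\mathcal{L}_r({\bf \vec n},m)$. The dominant bad configuration is a pair $\{e,f\}$ with $|e\cap f|=2$. First I would record that the number of admissible edge-pairs meeting in exactly two vertices is $\tfrac12\sigma_2\sigma_{r-2}^2\bigl(1+O(1/n)\bigr)$: choose the two shared vertices ($\sigma_2$ ways), then extend to each of the two edges over the remaining parts ($\sigma_{r-2}$ ways each, up to lower-order corrections), and divide by $2$. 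Dividing by $\binom{\sigma_r}{2}$ and multiplying by $\binom{m}{2}$ shows that $\mu$ is precisely the expected number of bad pairs in a uniformly random member of $\mathcal{H}_r({\bf \vec n},m)$.

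The switching from $\mathcal{C}_t$ to $\mathcal{C}_{t-1}$ selects a bad pair with shared vertices $x\in A_a$ and $y$, and relocates $x$ to a fresh $x'\in A_a$ chosen so that the modified edge is new and meets every other edge in at most one vertex; generically this destroys exactly one bad pair. Counting over the bipartite switching graph, the forward count from $H\in\mathcal{C}_t$ is (number of bad pairs)$\times$(safe relocations) $\approx t\cdot R$, while the backward count from $H'\in\mathcal{C}_{t-1}$ is (ways to create a new bad pair)$\times$(the same relocation multiplicity) $\approx\mu\cdot R$; the common factor $R$ cancels and double counting yields
\[
\frac{|\mathcal{C}_t|}{|\mathcal{C}_{t-1}|}=\frac{\mu}{t}\bigl(1+O(\varepsilon)\bigr),\qquad \varepsilon:=\frac{m^2}{n^3}+\frac{m^3}{n^4},
\]
uniformly over the relevant range of $t$. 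Telescoping gives $|\mathcal{C}_t|=|\mathcal{C}_0|\,\mu^t/t!\,(1+O(\varepsilon))^t$, and summing over $t$ together with a tail bound produces $|\mathcal{H}_r({\bf \vec n},m)|=\sum_t|\mathcal{C}_t|=|\mathcal{C}_0|\,e^{\mu}\bigl(1+O(\varepsilon)\bigr)$, i.e.\ $|\mathcal{C}_0|=|\mathcal{H}_r({\bf \vec n},m)|\,e^{-\mu}\bigl(1+O(\varepsilon)\bigr)$, which combines with the first paragraph to give the theorem. The corollary is then immediate by setting $n_i\equiv1$ and $k=n$, so that $\sigma_2=N_2,\ \sigma_{r-2}=N_{r-2},\ \sigma_r=N_r$ and $\mu\to[r]_2^2[m]_2/(4n^2)$.

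The main obstacle is the error control, i.e.\ justifying the word ``generically'' above and confining the deviations to $O(\varepsilon)$. This requires showing that all non-dominant configurations contribute only to the error: pairs of edges meeting in $\geqslant3$ vertices are governed by $\sigma_3\sigma_{r-3}^2/\sigma_r^2$ and contribute $O(m^2/n^3)$, while triples of edges carrying two overlapping bad pairs (the failure of independence of bad pairs) contribute $O(m^3/n^4)$, which is exactly why the hypothesis $m=o(n^{4/3})$, forcing $m^3/n^4=o(1)$, is the natural threshold. Verifying that the number of safe relocations in each switching is $\sigma_r\bigl(1+O(\varepsilon)\bigr)$ and that the bad pairs of a typical $H\in\mathcal{C}_t$ are co-degree-$2$ and essentially vertex-disjoint is where the condition $\sum_i 1/n_i\leqslant Ck^2/n$ enters, since the obstruction counts and the elementary-symmetric-function ratios $\sigma_{j}\sigma_{r-j}/\sigma_r$ must be estimated uniformly over the parts; this bookkeeping, rather than the Poisson heuristic itself, is the technical heart of the argument.
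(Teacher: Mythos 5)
Your overall architecture is the paper's: write $|\mathcal{L}_r({\bf \vec n},m)|$ as $\binom{\sigma_r}{m}$ times the linear proportion, stratify by the number of bad pairs, estimate consecutive ratios by switchings, and sum a Poisson-type series; the specialization to $k=n$ at the end is exactly the paper's proof of the corollary. The genuine gap is in the switching you chose. First, relocating $x$ to a fresh $x'\in A_a$ \emph{in the same part} is impossible in precisely the case the corollary needs: for $k=n$ every part is a singleton, so $A_a=\{x\}$ and there are no relocations at all. Even if you repair this by allowing $x'$ in any part not already met by the edge, a second and more serious problem remains: the reverse count is not a function of $t$. Creating a new bad pair by moving one vertex of $e'$ requires $e'$ to \emph{already} share exactly one vertex $y$ with the edge $f$ it will become linked to, so the number of reverse switchings available at $H''\in\mathcal{C}_{t-1}$ is governed by the number of pairs of edges of $H''$ meeting in exactly one vertex. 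That quantity is not determined by $t$ and fluctuates wildly over the class (it is $0$ for a matching and $\binom{m}{2}$ for a sunflower, both of which lie in $\mathcal{C}_0$), so the claimed uniform ratio $|\mathcal{C}_t|/|\mathcal{C}_{t-1}|=\frac{\mu}{t}(1+O(\varepsilon))$ does not follow from double counting; the ``common factor $R$ cancels'' step is where this is hidden. The paper avoids this by switching on whole clusters: delete both edges of a $2$-intersecting pair and insert two fresh edges sharing no link with anything (forward), or delete two link-free edges and insert a fresh $2$-intersecting pair (reverse). Both of those counts are $t\sigma_r^2(1+O(m/n^2))$ and $\binom{m-2(t-1)}{2}\sigma_2\sigma_{r-2}^2(1+O(1/n+m/n^2))$ respectively, i.e.\ determined by $t$ up to small relative errors, which is what makes Lemma~\ref{l2.6} applicable.

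Two smaller points. The per-step relative error in the ratio cannot be $O(m^2/n^3+m^3/n^4)$ as you assert: already the correction for pairs meeting in $\geqslant 3$ vertices is a relative $O(1/n)$, and the honest per-step error is $O(1/n+m/n^2)$, which only becomes $O(m^2/n^3+m^3/n^4)$ in the exponent after accumulating over the $\asymp\mu=O(m^2/n^2)$ relevant values of $t$. Also, replacing $\binom{m-2(t-1)}{2}$ by $\binom{m}{2}$ in the telescoping (your $\mu^t/t!$) discards a $t$-dependent factor; it happens to be absorbed into $O(m^3/n^4)$, but that is exactly the second-order term $-\tfrac12 AC$ that Lemma~\ref{l2.6} is designed to control, and it needs to be checked rather than suppressed. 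The paper also truncates at $t\leqslant M=O(\log n+m^2/n^2)$ clusters and discards, via Lemma~\ref{l2.5}, the hypergraphs with a $\geqslant 3$-fold overlap or a cluster of $\geqslant 3$ edges \emph{before} switching, which is what lets every remaining configuration be a clean two-edge, two-vertex overlap; your last paragraph correctly identifies these as the items to control but does not carry out the reduction.
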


The remainder of the paper is structured as follows. Lemmas
are presented in Section~\ref{s:2}. In Section~\ref{s:3},
we complete the enumeration of $\mathcal{L}_r({\bf \vec{n}},m)$ with $m=o(n^{ \frac{4}{3}})$.

\section{Some Lemmas}\label{s:2}

In order to identify several events which have low probabilities
in the uniform probability space $\mathcal{H}_r({\bf \vec{n}},m)$ with
$m=o(n^{ \frac{4}{3}})$, the following lemmas  will be useful.

\begin{lemma}\label{l2.3}
For a fixed integer $r\geqslant 3$, let $s$ and $k=k(n)$ be integers with
$1\leqslant s\leqslant r\leqslant k\leqslant n$.
Let $\sigma_{s}({\bf \vec{n}})$ be the $s$-th elementary symmetric
function of ${\bf \vec{n}}=(n_1,\ldots,n_k)$.
 Suppose that there exists a constant $C>0$ such that  $\sum_{i=1}^k
 \frac{1}{n_i}\leqslant C \frac{k^2}{n}$.
Then  $\sigma_{s}({\bf \vec{n}})= O( n^{s-r})\sigma_r({\bf \vec{n}})$.
\end{lemma}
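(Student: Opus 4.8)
The plan is to establish the estimate in the equivalent form $\sigma_s(\vec n)/\sigma_r(\vec n)=O(n^{s-r})$ by combining an easy upper bound on $\sigma_s$ with a matching lower bound on $\sigma_r$ (throughout I write $\sigma_s$ for $\sigma_s(\vec n)$ and recall $\sum_{i=1}^k n_i=n$, $n_i\ge 1$). For the upper bound I would use the crude but clean estimate $s!\,\sigma_s\le(\sum_i n_i)^s=n^s$, valid because $s!\,\sigma_s$ is the sum of $n_{i_1}\cdots n_{i_s}$ over ordered tuples of \emph{distinct} indices, which is dominated by the full expansion of $(\sum_i n_i)^s$. Thus $\sigma_s\le n^s/s!$, and everything reduces to producing a constant $c=c(C,r)>0$ with $\sigma_r\ge c\,n^r$: the claim then follows since $\sigma_s\le (s!\,c)^{-1}n^{s-r}\sigma_r$.

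The lower bound on $\sigma_r$ is the heart of the matter, and the main obstacle is that the naive second-moment argument fails under concentration. The inclusion–exclusion estimate gives $r!\,\sigma_r\ge(\sum_i n_i)^r-\binom{r}{2}\bigl(\sum_i n_i^2\bigr)\bigl(\sum_i n_i\bigr)^{r-2}$, but a single dominant block makes $\sum_i n_i^2$ of order $n^2$, so the right-hand side can be negative and the bound is vacuous. The role of the hypothesis $\sum_i 1/n_i\le Ck^2/n$ is precisely to forbid such concentration: by Cauchy–Schwarz on any sub-collection $S$ of blocks one has $\sum_{i\in S}1/n_i\ge |S|^2/\sum_{i\in S}n_i$, so the hypothesis forces $\sum_{i\in S}n_i$ to be a constant fraction of $n$ as soon as $|S|$ is a constant fraction of $k$.

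Concretely, I would order the blocks so that $n_1\ge\cdots\ge n_k$, fix a small constant $\delta=\delta(C,r)>0$, and split on the size of the $r$-th largest block $n_r$. If $n_r\ge\delta n$, then the top $r$ blocks are distinct and their product is a single term of $\sigma_r$, so $\sigma_r\ge n_1\cdots n_r\ge(\delta n)^r$ immediately. If instead $n_r<\delta n$, I discard the top $r-1$ blocks and set $S=\{r,\dots,k\}$, every block of which has size $<\delta n$. Cauchy–Schwarz on $S$ together with the hypothesis gives $\frac{(k-r+1)^2}{\sum_{i\in S}n_i}\le\sum_{i\in S}\frac1{n_i}\le\frac{Ck^2}{n}$, and since $k\ge r$ forces $(k-r+1)/k\ge 1/r$, this yields $\sum_{i\in S}n_i\ge n/(Cr^2)=:\alpha n$. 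Thus $S$ carries a constant fraction of the mass while each of its blocks is tiny; feeding $\max_{i\in S}n_i<\delta n$ into $\sum_{i\in S}n_i^2\le(\max_{i\in S}n_i)\sum_{i\in S}n_i$ bounds the second-moment defect on $S$ by $\binom{r}{2}(\delta/\alpha)\bigl(\sum_{i\in S}n_i\bigr)^r$, which is at most $\tfrac12\bigl(\sum_{i\in S}n_i\bigr)^r$ once $\delta\le\alpha/(r(r-1))$. Since all terms are nonnegative, $\sigma_r\ge\sigma_r\bigl((n_i)_{i\in S}\bigr)\ge\tfrac1{2\,r!}(\alpha n)^r$.

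Choosing $\delta=\alpha/(r(r-1))$ keeps both $\alpha$ and $\delta$ constants depending only on $C$ and $r$, and in the second case $S$ automatically contains at least $r$ blocks, since its mass is $\ge\alpha n$ while each block is $<\delta n$, giving $|S|>\alpha/\delta=r(r-1)\ge r$; hence $\sigma_r\bigl((n_i)_{i\in S}\bigr)$ is a genuine sum over $r$-subsets. Taking $c=\min\{\delta^r,\ \alpha^r/(2\,r!)\}$ covers both cases and yields $\sigma_r\ge c\,n^r$, completing the proof. I expect the only delicate point to be the bookkeeping around the dominant-block case — choosing $\delta$ small enough that the discarded large blocks cannot sabotage the defect bound on $S$, and checking $|S|\ge r$ — while the two core estimates become routine once the hypothesis has been converted, via Cauchy–Schwarz, into the statement that no bounded family of blocks can absorb all but a vanishing fraction of the mass.
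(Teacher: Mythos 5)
Your proposal is correct, but it takes a genuinely different route from the paper. The paper's proof is a three-line application of Newton's inequality: writing $S_j=\binom{k}{j}^{-1}\sigma_j({\bf \vec{n}})$, the ratios $S_{j-1}/S_j$ are nondecreasing in $j$, so each of the $r-s$ consecutive ratios between $S_s$ and $S_r$ is bounded by the last one, $S_{k-1}/S_k=\frac{1}{k}\sum_i n_i^{-1}\leqslant Ck/n$; chaining these and converting the binomial coefficients back (using $k^{r-s}=O([k-s]_{r-s})$) gives $\sigma_s/\sigma_r=O(n^{s-r})$ directly, as a statement about consecutive ratios, without ever producing absolute bounds on either quantity. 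You instead prove the two one-sided absolute bounds $\sigma_s\leqslant n^s/s!$ and $\sigma_r\geqslant c(C,r)\,n^r$, the latter via a case split on the size of the $r$-th largest block together with Cauchy--Schwarz (to show the small blocks carry mass $\geqslant n/(Cr^2)$) and a second-moment/inclusion--exclusion estimate restricted to those small blocks. I checked the details — the bound $(k-r+1)/k\geqslant 1/r$, the defect estimate $\sum_{i\in S}n_i^2\leqslant(\max_{i\in S}n_i)\sum_{i\in S}n_i$, the choice $\delta=\alpha/(r(r-1))$, and the verification $|S|>r$ — and they all go through. Your argument is longer and more elementary (only Cauchy--Schwarz and expansion of $(\sum n_i)^r$), and it yields the stronger byproduct $\sigma_j({\bf \vec{n}})=\Theta(n^j)$ for every $j\leqslant r$ under the hypothesis; the paper's argument is shorter and exploits the log-concavity of the normalized symmetric means, which is exactly the structural fact that makes the single hypothesis on $\sum_i n_i^{-1}$ (i.e.\ on $S_{k-1}/S_k$) control all the intermediate ratios at once.
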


\begin{proof} Let $S_j({\bf \vec{n}})=\binom{k}{j}^{-1}\sigma_j({\bf \vec{n}})$ for all $j=0,\ldots,k$. It is
clear that $S_{k-1}({\bf \vec{n}})=k^{-1}\sum_{i=1}^k \frac{n_1\cdots n_k}{n_i}$
and $S_{k}({\bf \vec{n}})=n_1\cdots n_k$. By Newton's inequality,
we have $S_{j-1}({\bf \vec{n}})S_{j+1}({\bf \vec{n}})\leqslant S_{j}^2({\bf \vec{n}})$,
and then
\begin{equation*}\label{e2.3}
 \frac{S_{s-1}({\bf \vec{n}})}{S_{s}({\bf \vec{n}})}\leqslant \frac{S_{s}({\bf \vec{n}})}
{S_{s+1}({\bf \vec{n}})}\leqslant\cdots\leqslant \frac{S_{k-1}({\bf \vec{n}})}
{S_{k}({\bf \vec{n}})}= \frac{1}{k}\sum_{i=1}^k\frac{1}{n_i}\leqslant C \frac{k}{n}. 
\end{equation*}
Therefore
\[
    \frac{\sigma_s({\bf \vec{n}})}{\sigma_r({\bf \vec{n}})}
    = \frac{[r]_{r-s}}{[k-s]_{r-s}}\, \frac{S_s({\bf \vec{n}})}{S_r({\bf \vec{n}})}
    \leqslant \frac{[r]_{r-s}}{[k-s]_{r-s}} \, \frac{C^{r-s}k^{r-s}}{n^{r-s}}
    = O(n^{s-r}),
 \]
 where the last step holds since $s\leqslant r=O(1)$ and $k\geqslant r$ imply
 that $k^{r-s} = O([k-s]_{r-s})$.
\end{proof}

The following two lemmas are vector forms
of~\cite[Lemmas 2.1 and 2.2]{mckay18}. Their proofs are similar to those
in~\cite{mckay18}, but
Lemma~\ref{l2.3} is a key requirement in the proof of Lemma~\ref{l2.5}.

\begin{lemma}\label{l2.4}
For a fixed integer $r\geqslant 3$, let $k=k(n)$ be an integer with $r\leqslant k\leqslant n$,
and $H$ be chosen uniformly at random from $\mathcal{H}_r({\bf \vec{n}},m)$.
Let  $t=t(n)\geqslant 1$ be an integer and $e_1,\ldots,e_{t}$
be distinct $r$-sets in $\mathcal{E}_r({\bf \vec{n}})$.
Then the probability that $\{e_1,\ldots,e_t\}$ are  edges of $H$ is at most
$\bigl( \frac{m}{\sigma_{r}({\bf \vec{n}})}\bigr)^t$.
\end{lemma}

\begin{proof} Since $H$ is a $k$-partite $r$-graph
 that is chosen uniformly at random from $\mathcal{H}_r({\bf \vec{n}},m)$, the probability
that $e_1,\ldots,e_t$ are edges of $H$ is
\begin{align*}
 \frac{\binom{\sigma_{r}({\bf \vec{n}})-t}{m-t}}{\binom{\sigma_{r}({\bf \vec{n}})}{m}}
 &= \frac{[m]_t}{[\sigma_{r}({\bf \vec{n}})]_t}=
\prod_{i=0}^{t-1} \frac{m-i}{\sigma_{r}({\bf \vec{n}})-i}
\leqslant\Bigl( \frac{m}{\sigma_{r}({\bf \vec{n}})}\Bigr)^t.\qedhere
\end{align*}
\end{proof}

\begin{lemma}\label{l2.5}
Let $r\geqslant 3$, $t$ and $\alpha$ be integers such that $r,t,\alpha=O(1)$
and $0\leqslant\alpha\leqslant rt$. For any integer $k=k(n)$ with $r\leqslant k\leqslant n$,
let $H$ be chosen uniformly at random from $\mathcal{H}_r({\bf \vec{n}},m)$.
If there exists a constant $C>0$ such that  $\sum_{i=1}^k \frac{1}{n_i}\leqslant C \frac{k^2}{n}$,
then
 the expected number of sets of $t$ edges whose union has
$rt-\alpha$ or fewer vertices is $O(m^tn^{-\alpha})$.
\end{lemma}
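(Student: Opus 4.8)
The plan is to bound the expectation by combining a union bound over all candidate sets of $t$ edges with the probability estimate of Lemma~\ref{l2.4}. Writing $E$ for the quantity to be estimated, we have
\[
E = \sum \Pr\bigl[\{e_1,\ldots,e_t\}\subseteq H\bigr],
\]
where the sum ranges over all unordered sets of $t$ distinct $r$-sets $e_1,\ldots,e_t\in\mathcal{E}_r({\bf \vec{n}})$ whose union $e_1\cup\cdots\cup e_t$ has at most $rt-\alpha$ vertices. By Lemma~\ref{l2.4} each probability is at most $\bigl(m/\sigma_r({\bf \vec{n}})\bigr)^t$, so the whole problem reduces to showing that the number $N$ of such sets satisfies $N=O\bigl(\sigma_r({\bf \vec{n}})^t n^{-\alpha}\bigr)$; multiplying the two bounds then yields $E=O(m^t n^{-\alpha})$.

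To estimate $N$ I would count ordered sequences $(e_1,\ldots,e_t)$ instead (this overcounts by the constant factor $t!=O(1)$) and build the union one edge at a time. Set $U_0=\varnothing$ and $U_i=e_1\cup\cdots\cup e_i$, and let $a_i=|U_i|-|U_{i-1}|$ be the number of vertices that $e_i$ contributes and that are new. Then $a_1=r$, each $a_i\in\{0,1,\ldots,r\}$, and the union has $\sum_{i=1}^t a_i$ vertices; the constraint $|U_t|\leqslant rt-\alpha$ is exactly $\beta:=\sum_{i=2}^t(r-a_i)\geqslant\alpha$. For a fixed tuple $(a_2,\ldots,a_t)$, I would bound the number of ways to choose $e_i$ given $U_{i-1}$: the $r-a_i$ vertices of $e_i$ lying in $U_{i-1}$ are chosen from the at most $r(i-1)=O(1)$ vertices available, contributing a factor $O(1)$, while the $a_i$ new vertices lie one per part in $a_i$ fresh parts, contributing at most $\sigma_{a_i}({\bf \vec{n}})$ ways. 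The first edge contributes $\sigma_r({\bf \vec{n}})$.

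Invoking Lemma~\ref{l2.3} to write $\sigma_{a_i}({\bf \vec{n}})=O\bigl(n^{a_i-r}\bigr)\sigma_r({\bf \vec{n}})$ (and noting $\sigma_r({\bf \vec{n}})=\Omega(n^r)$, which follows from the case $s=1$ of Lemma~\ref{l2.3} together with $\sigma_1({\bf \vec{n}})=n$, so that the case $a_i=0$ is also covered), the number of sequences realizing a fixed tuple $(a_2,\ldots,a_t)$ is at most
\[
O(1)\,\sigma_r({\bf \vec{n}})\prod_{i=2}^t n^{a_i-r}\sigma_r({\bf \vec{n}})
=O\bigl(\sigma_r({\bf \vec{n}})^t\,n^{-\beta}\bigr).
\]
Since $t,r=O(1)$ there are only $O(1)$ admissible tuples, and each satisfies $n^{-\beta}\leqslant n^{-\alpha}$, so summing gives $N=O\bigl(\sigma_r({\bf \vec{n}})^t n^{-\alpha}\bigr)$ and hence $E=O(m^tn^{-\alpha})$, as claimed.

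The crux of the argument --- and the step where I expect the real work to lie --- is the combinatorial count $N$: one must organize the overlaps so that every vertex saved relative to the disjoint case $rt$ is paid for by a genuine factor of $n^{-1}$. This is exactly what Lemma~\ref{l2.3} delivers, converting the deficiency $\beta$ of vertices into the factor $n^{-\beta}$; without the hypothesis $\sum_i 1/n_i\leqslant Ck^2/n$ that underlies Lemma~\ref{l2.3}, the symmetric functions $\sigma_{a_i}({\bf \vec{n}})$ need not shrink by the required power of $n$, and the bound would fail. Some care is also needed to keep the bookkeeping of ``old'' versus ``new'' vertices consistent with the $k$-partite (transversal) structure of the edges, but since $r$ and $t$ are constant all such choices contribute only bounded factors.
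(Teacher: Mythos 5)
Your proof is correct and follows essentially the same route as the paper's: a union bound combined with Lemma~\ref{l2.4} for the probability of a fixed edge set, plus a sequential count of ordered edge sequences parametrized by the overlap sizes $a_i$, with Lemma~\ref{l2.3} converting each repeated vertex into a factor of $n^{-1}$. The only differences are cosmetic --- you track new vertices where the paper tracks repeated ones, you bound the $O(1)$ admissible tuples directly rather than summing a geometric series over $\beta$, and your explicit treatment of the $a_i=0$ case via $\sigma_r({\bf \vec{n}})=\Omega(n^r)$ makes rigorous a point the paper leaves implicit.
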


\begin{proof} Let $e_1,\ldots,e_t$ be distinct $r$-sets in $\mathcal{E}_r({\bf \vec{n}})$.
We first bound the number of sequences $e_1,\ldots,e_t$ such that $|e_1\cup\cdots\cup e_t|=rt-\beta$
 for some $\beta$ with $ \alpha\leqslant \beta<rt$, regardless of whether they are edges of $H$.
For $2\leqslant i\leqslant t$, define $a_i=|(e_1\cup\cdots\cup e_{i-1})\cap e_i|$, thus we have
 $\sum_{i=2}^ta_i=\beta$. The first $r$-set $e_1$ can be chosen in $\sigma_{r}({\bf \vec{n}})$
 ways, then for $2\leqslant i\leqslant t$, the number of choices for $e_i$ given $e_1,\ldots,e_{i-1}$
 is at most $(rt)^{a_i}\sigma_{r-a_i}({\bf \vec{n}})$. Note that by Lemma~\ref{l2.3},
 $\sigma_{r-a_i}({\bf \vec{n}})=\sigma_{r}({\bf \vec{n}})O(n^{-a_i})$.
  Therefore, the number of choices of $e_1,\ldots,e_t$ for given $\beta,a_2,\ldots,a_t$ is at most
 $O(1)\sigma_{r}^t({\bf \vec{n}})\prod_{i=2}^{t}(rt)^{a_i}n^{-a_i}=O(\sigma_{r}^t({\bf \vec{n}})n^{-\beta})$.

The number of choices of $a_2,\ldots,a_t$ given $\beta$ is at most $O(1)$ by $r, t=O(1)$.
Also, by Lemma~\ref{l2.4}, the probability that $e_1,\ldots,e_t\in H$ is at most
$(m/\sigma_{r}({\bf \vec{n}}))^t$. Therefore, the expected number of sets of $t$ edges
of $H$ whose union has size $rt-\beta$ is $O(m^tn^{-\beta})$, uniformly over $\beta$.
Finally, the sum of this expression over $\beta\geqslant\alpha$ is bounded by a decreasing geometric series
dominated by the term $\beta=\alpha$. This completes the proof.
\end{proof}

We also need the following Lemma from~\cite{green06}, which was used to enumerate
some hypergraphs in~\cite{vlaejoc,valelec,hashe20,mckay18}.

\begin{lemma}[\cite{green06}, Corollary~4.5]\label{l2.6}
Let $N\geqslant 2$ be an integer, and for $1\leqslant i\leqslant N$, let
real numbers $A(i)$, $B(i)$ be given such that
$A(i)\geqslant 0$ and $1-(i-1)B(i)\geqslant 0$. Define $A_1=\min_{i=1}^NA(i)$,
$A_2=\max_{i=1}^NA(i)$, $C_1=\min_{i=1}^NA(i)B(i)$
and $C_2=\max_{i=1}^NA(i)B(i)$. Suppose that there exists
a real number $\hat{c}$ with $0<\hat{c}< \frac{1}{3}$
such that $\max\{A/N,|C|\}\leqslant \hat{c}$ for all $A\in [A_1,A_2]$, $C\in[C_1,C_2]$.
Define $h_0$, $h_1$, $\ldots$, $h_N$ by $h_0=1$ and $ \frac{h_i}{h_{i-1}}= \frac{A(i)}{i}(1-(i-1)B(i))$
for $1\leqslant i\leqslant N$, with the following interpretation:
if $A(i)= 0$ or $1-(i-1)B(i)=0$, then $h_j=0$
for $i\leqslant j\leqslant N$. Then $\Sigma_1\leqslant \sum_{i=0}^{N}h_i\leqslant \Sigma_2$,
where $\Sigma_1=\exp[A_1- \frac{1}{2}A_1C_2]-(2e\hat{c})^N$ and
$\Sigma_2=\exp[A_2- \frac{1}{2}A_2C_1+ \frac{1}{2}A_2C_1^2]+(2e\hat{c})^N$.
\end{lemma}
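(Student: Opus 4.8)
The plan is to treat this as a self-contained analytic summation estimate and to establish matching upper and lower bounds on $\sum_{i=0}^N h_i$ by comparing the sequence $(h_i)$ term-by-term against an explicitly summable reference sequence. The crucial first step is an algebraic simplification of the defining ratio. Writing $C(i):=A(i)B(i)$, so that $C_1\le C(i)\le C_2$ and $|C(i)|\le\hat c$ for every $i$, I would rewrite
\[
\frac{h_i}{h_{i-1}}=\frac{A(i)}{i}\bigl(1-(i-1)B(i)\bigr)=\frac{A(i)-(i-1)C(i)}{i},
\]
which expresses each ratio purely through the two quantities $A(i)$ and $C(i)$ that the hypotheses control, and in particular avoids the possibly unbounded quotient $B(i)=C(i)/A(i)$. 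From $A/N\le\hat c$ one also records $A(i)\le A_2\le\hat c N$ for all $i$.

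For the upper bound, using $A(i)\le A_2$ and $C(i)\ge C_1$ the identity gives $h_i/h_{i-1}\le(A_2-(i-1)C_1)/i$, which is exactly the ratio of the reference sequence $g_i:=\binom{A_2/C_1}{i}C_1^{\,i}$. As long as these reference ratios stay nonnegative, a direct induction yields $h_i\le g_i$, and the (convergent, since $|C_1|\le\hat c<1$) binomial series sums to $\sum_i g_i=(1+C_1)^{A_2/C_1}$. Taking logarithms and invoking $\tfrac{\ln(1+C_1)}{C_1}=1-\tfrac12C_1+\tfrac13C_1^2-\cdots$, one checks, splitting into the cases $C_1\ge0$ and $C_1<0$ and using $|C_1|<\tfrac13$ to dominate the remainder of the series, that $(1+C_1)^{A_2/C_1}\le\exp[A_2-\tfrac12A_2C_1+\tfrac12A_2C_1^2]$, the main term of $\Sigma_2$. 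The lower bound is symmetric: with $A(i)\ge A_1$ and $C(i)\le C_2$ the identity gives $h_i/h_{i-1}\ge(A_1-(i-1)C_2)/i$, the reference sum is $(1+C_2)^{A_1/C_2}$, and the cleaner estimate $\tfrac{\ln(1+C_2)}{C_2}\ge1-\tfrac12C_2$ (valid for $|C_2|<\tfrac13$, and requiring no quadratic correction — which is precisely why $\Sigma_1$ carries no $C_2^2$ term) yields the main term $\exp[A_1-\tfrac12A_1C_2]$.

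The remaining and genuinely delicate step is to account for the additive corrections $\pm(2e\hat c)^N$, which absorb the boundary effects that the clean term-by-term comparison does not cover: the truncation of the infinite binomial series at $i=N$, and — more seriously — the indices $i$ at which the reference ratio $(A_2-(i-1)C_1)/i$ turns negative while $h_i/h_{i-1}$ must stay nonnegative, so that $h_i\le g_i$ can fail. To handle these I would split the sum at a threshold and bound the tail of $(h_i)$ directly from the identity: since $|A(i)-(i-1)C(i)|\le\hat c N+(i-1)\hat c\le2\hat c N$, one has $h_i\le(2\hat c N)^i/i!$, and combining this with $i!\ge(i/e)^i$ produces a factor $(2e\hat c N/i)^i$ whose contribution near $i=N$ is of order $(2e\hat c)^N$. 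I expect this tail-and-threshold bookkeeping — carried out uniformly over the signs of $C_1,C_2$ and over the degenerate case where some $A(i)$ or $1-(i-1)B(i)$ vanishes and forces $h_j=0$ thereafter — to be the main obstacle; by contrast, the algebraic identity and the binomial-series evaluation of the main term are routine once the extremal constants $A_1,A_2,C_1,C_2$ are matched to the correct direction of each inequality.
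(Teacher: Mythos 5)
You should note first that the paper contains no proof of this statement at all: Lemma~\ref{l2.6} is imported verbatim as Corollary~4.5 of \cite{green06}, so the only meaningful comparison is with the original argument of Greenhill, McKay and Wang. Measured against that, your outline is essentially a reconstruction of the same strategy: their proof also reduces to the constant-parameter sequence, and your reference sequence $g_i=\binom{A_2/C_1}{i}C_1^i=\frac{1}{i!}\prod_{j=0}^{i-1}(A_2-jC_1)$ is exactly the extremal sequence they compare against (they evaluate its sum by logarithmic expansion rather than by Newton's series, a cosmetic difference), with the $(2e\hat{c})^N$ corrections likewise coming from bounds of the shape $h_i\leqslant (2\hat{c}N)^i/i!\leqslant (2e\hat{c}N/i)^i$. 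Your algebraic identity $h_i/h_{i-1}=(A(i)-(i-1)C(i))/i$ is correct, and so are the series inequalities: for $|t|\leqslant \frac{1}{3}$ one has $1-\frac{t}{2}\leqslant \frac{\ln(1+t)}{t}\leqslant 1-\frac{t}{2}+\frac{t^2}{2}$ (the remainder $\sum_{k\geqslant 2}(-t)^k/(k+1)$ is at most $\frac{t^2}{3(1-|t|)}\leqslant \frac{t^2}{2}$ in absolute value), which produces the main terms of $\Sigma_1$ and $\Sigma_2$ exactly as you say.

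Two concrete remarks on the step you flag as the main obstacle. First, the failure mode you fear for the upper bound cannot actually occur: the hypothesis $A(i)\geqslant 0$, $1-(i-1)B(i)\geqslant 0$ at index $i$ gives $(i-1)C_1\leqslant (i-1)C(i)\leqslant A(i)\leqslant A_2$, so the reference ratio $(A_2-(i-1)C_1)/i$ is automatically nonnegative for \emph{every} $i\leqslant N$, and the induction $h_i\leqslant g_i$ goes through on the whole range; only the truncation at $N$ (and, for $C_1>0$, the alternating tail of the binomial series, whose magnitudes decrease since the ratio is bounded by $C_1<\frac13$) consumes the $(2e\hat{c})^N$ allowance. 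Second, for the lower bound the sign problem is real but benign: truncate the comparison at the first index $i_0$ where $A_1-(i_0-1)C_2<0$; for $i<i_0$ you have $h_i\geqslant g_i'\geqslant 0$, and the omitted tail of the series for $(1+C_2)^{A_1/C_2}$ is alternating with decreasing magnitudes and a negative leading term, hence nonpositive, so that truncation only strengthens the bound (this also disposes of the degenerate cases, since if some $A(i)=0$ then $A_1=0$ and $\Sigma_1\leqslant 1\leqslant \sum_i h_i$ trivially). The one place where genuine work remains is your tail bookkeeping: $(2e\hat{c}N/i)^i$ equals $(2e\hat{c})^N$ only at $i=N$, and summing the tail costs a geometric factor such as $(1-2\hat{c})^{-1}$ plus an exponent shift; since $\hat{c}$ may be as large as $\frac13$, so that $2e\hat{c}>1$, you cannot appeal to geometric smallness and must verify the accumulated constants fit under $(2e\hat{c})^N$ — this constant-chasing is precisely what \cite{green06} carries out. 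You should also define the reference sum by continuity at $C_1=0$, where it degenerates to $\sum_i A_2^i/i!\leqslant e^{A_2}$, consistent with $\Sigma_2$. With those repairs your plan is sound and matches the source.
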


\section{Enumeration of $\mathcal{L}_r({\bf \vec{n}},m)$
with $m=o(n^{ \frac{4}{3}})$}\label{s:3}

Let~$H$ be a~$k$-partite $r$-graph in $\mathcal{H}_r({\bf \vec{n}},m)$.
As defined in~\cite{mckay18}, a~$2$-set $\{x,y\}\subseteq [n]$
is called a \textit{link} if there are two distinct edges $e,f$ such that
$\{x,y\}\subseteq e\cap f$.
The two edges
$e$ and $f$ are called \textit{linked edges} if $|e\cap f|\geqslant 2$.
Let $G_H$ be the simple graph whose vertices are the edges of~$H$,
with two vertices of $G$ adjacent iff the corresponding edges of~$H$ are linked.
An edge-induced subgraph of $H$ corresponding to a non-trivial
component of $G_H$ is called a \textit{cluster} of $H$.

Let $\mathbb{P}_r({\bf \vec{n}},m)$ denote the probability that a~$k$-partite
$r$-graph $H\in \mathcal{H}_r({\bf \vec{n}},m)$ chosen uniformly at random is linear.
Hence,
\begin{align}\label{e3.1}
|\mathcal{L}_r({\bf \vec{n}},m)|=\binom{\sigma_{r}({\bf \vec{n}})}{m} \mathbb{P}_r({\bf \vec{n}},m).
\end{align}
We will prove that $\mathbb{P}_r({\bf \vec{n}},m)$ equals the exponential factor
in Theorem~\ref{t1.7}.

Firstly, we show that most of $\mathcal{H}_r({\bf \vec{n}},m)$
has a simple structure.
Define $\mathcal{H}_r^+({\bf \vec{n}},m)\subseteq\mathcal{H}_r({\bf \vec{n}},m)$
to be the set of $k$-partite $r$-graphs $H$ which satisfy the following
two properties $\bf(a)$ and $\bf(b)$.

$\bf(a)$\   Every cluster of $H$ consists of two edges overlapping by two vertices.

$\bf(b)$\  The number of clusters in $H$ is at most $M$,
where  $M=\Bigl\lceil\log n+ \frac{56\sigma_{r-2}^2({\bf \vec{n}})\sigma_2({\bf \vec{n}})m^2}
{\sigma_{r}^2({\bf \vec{n}})}\Bigr\rceil$.

We show that  the  expected number of $k$-partite $r$-graphs in $\mathcal{H}_r({\bf \vec{n}},m)$
not satisfying the properties of $\mathcal{H}_r^+({\bf \vec{n}},m)$ is quite small.

\begin{lemma}\label{l6.1}
For a fixed integer $r\geqslant 3$, let $k=k(n)$ and $m=m(n)$ be integers with
$r\leqslant k\leqslant n$ and $m=o(n^{ \frac{4}{3}})$. Then, as $n\rightarrow \infty$,
$ \frac{|\mathcal{H}_r^+({\bf \vec{n}},m)|}{|\mathcal{H}_r({\bf \vec{n}},m)|}
 =1-O\bigl( \frac{m^2}{n^3}+ \frac{m^3}{n^4}\bigr)$.
\end{lemma}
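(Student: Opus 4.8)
The plan is to pass to the uniform random model: the ratio $|\mathcal{H}_r^+({\bf \vec{n}},m)|/|\mathcal{H}_r({\bf \vec{n}},m)|$ is exactly the probability that a uniformly random $H\in\mathcal{H}_r({\bf \vec{n}},m)$ satisfies both \textbf{(a)} and \textbf{(b)}. Since this ratio is at most $1$, by the union bound it suffices to show that each of the two bad events---that $H$ violates \textbf{(a)}, and that $H$ violates \textbf{(b)}---has probability $O(m^2/n^3+m^3/n^4)$. Both bounds are first-moment estimates resting on Lemmas~\ref{l2.4} and~\ref{l2.5}.

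For \textbf{(a)}, I would observe that a cluster fails to be a pair of edges overlapping in exactly two vertices in one of exactly two ways. Either some two linked edges $e,f$ have $|e\cap f|\geqslant 3$, in which case $|e\cup f|\leqslant 2r-3$ and Lemma~\ref{l2.5} with $t=2,\ \alpha=3$ bounds the expected number of such pairs by $O(m^2n^{-3})$; or some cluster contains at least three edges, in which case it contains three edges forming a connected subgraph of $G_H$, and adding these three edges one at a time in an order following the links shows their union spans at most $3r-4$ vertices, so Lemma~\ref{l2.5} with $t=3,\ \alpha=4$ bounds the expected number of such triples by $O(m^3n^{-4})$. By Markov's inequality, $H$ violates \textbf{(a)} with probability $O(m^2/n^3+m^3/n^4)$.

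For \textbf{(b)}, let $Z$ be the number of clusters of $H$. If $Z>M$ then, choosing one link (one edge of $G_H$) from each of $M$ distinct clusters and noting that distinct clusters are distinct components of $G_H$, we obtain $M$ pairwise edge-disjoint links. Hence $\Pr[Z>M]$ is at most the expected number $\mathbb{E}[W]$ of unordered families of $M$ pairwise edge-disjoint links. Counting each link by its intersecting $2$-set ($\sigma_2({\bf \vec{n}})$ choices) and by the two edges through it (at most $\sigma_{r-2}({\bf \vec{n}})$ choices for each), and bounding the probability that all $2M$ participating edges lie in $H$ by $(m/\sigma_r({\bf \vec{n}}))^{2M}$ via Lemma~\ref{l2.4}, I would obtain
\begin{align*}
\mathbb{E}[W]\ \leqslant\ \frac{1}{M!}\Bigl(\frac{\sigma_2({\bf \vec{n}})\,\sigma_{r-2}^2({\bf \vec{n}})\,m^2}{\sigma_r^2({\bf \vec{n}})}\Bigr)^{M}.
\end{align*}
Writing $\mu=\sigma_2({\bf \vec{n}})\sigma_{r-2}^2({\bf \vec{n}})m^2/\sigma_r^2({\bf \vec{n}})$, the definition of $M$ forces $M\geqslant 56\mu$, so $M!\geqslant(M/e)^M$ gives $\mathbb{E}[W]\leqslant(e\mu/M)^M\leqslant(e/56)^M$. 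Since $56>e^4$ we have $e/56<e^{-3}$, and combined with $M\geqslant\log n$ this yields $\mathbb{E}[W]\leqslant e^{-3M}\leqslant n^{-3}$, which is $O(m^2/n^3)$. Combining the two estimates completes the proof.

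The step I expect to be the main obstacle is the first-moment estimate for \textbf{(b)}: establishing the displayed bound on $\mathbb{E}[W]$ uniformly in the growing parameter $M$ requires confirming that families of links sharing edges or vertices contribute only lower-order terms, so that the clean edge-disjoint count dominates, and the constant $56$ must be tracked precisely, since the whole tail argument hinges on the numerical fact $56>e^4$, which is exactly what pushes $\Pr[Z>M]$ below $n^{-3}$.
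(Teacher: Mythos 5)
Your proposal is correct and follows essentially the same route as the paper: the same two applications of Lemma~\ref{l2.5} (with $t=2,\alpha=3$ and $t=3,\alpha=4$) for property $\bf(a)$, and the same first-moment bound on families of edge-disjoint links via Lemma~\ref{l2.4}, with the identical computation $(e\mu/M)^M\leqslant(e/56)^M\leqslant n^{-3}$ exploiting $56>e^4$ and $M\geqslant\log n$ for property $\bf(b)$. The worry you flag at the end is not an obstacle, since (as you in fact note) links taken from distinct clusters are automatically carried by pairwise distinct edges, so the clean edge-disjoint count is already an upper bound and no lower-order correction terms arise.
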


\begin{proof} Consider $H\in \mathcal{H}_r({\bf \vec{n}},m)$ chosen uniformly at random.
 We apply Lemma~\ref{l2.5} several times to show that $H$ satisfies
 the properties $\bf(a)$ and $\bf(b)$ with probability $1-O\( \frac{m^2}{n^3}+ \frac{m^3}{n^4}\)$.

If two edges overlap by three or more vertices, then they have at most $2r-3$ vertices in
total, which has probability $O\( \frac{m^2}{n^{3}}\)$ by Lemma~\ref{l2.5}.
Similarly if there is a cluster of more than two edges, then three of those edges have
at most $3r-4$ vertices in total, which has probability
$O\( \frac{m^3}{n^{4}}\)$ by Lemma~\ref{l2.5}.
Therefore, $H$ satisfies the property $\bf(a)$ 
with probability $1-O\( \frac{m^2}{n^3}+ \frac{m^3}{n^4}\)$.

Note that if $\bf(a)$ holds, all clusters have two edges and
no two clusters share an edge or a link.
Define the event
\[  \mathcal{D}=\{\text{there exist at least $d$ edge- and link-disjoint clusters in $H$}\}, \]
where $d=M+1$.
Using Lemma~\ref{l2.4}, we have
\begin{align*}
\mathbb{P}[\mathcal{D}]&=O\biggl(\sigma_{r-2}^{2d}({\bf \vec{n}})
{\sigma_2({\bf \vec{n}})\choose d}\biggl(\frac{m}{\sigma_{r}({\bf \vec{n}})}\biggr)^{\!2d\,}\biggr)\\
&=O\biggl(\biggl( \frac{e\sigma_2({\bf \vec{n}})\sigma_{r-2}^2({\bf \vec{n}})m^2}
{d \sigma_{r}^2({\bf \vec{n}})}\biggr)^{\!d\,}\biggr)\\
&=O\Bigl( \Bigl(\frac{e}{56}\Bigr)^{\!d\,}\Bigr)\\
&=O\Bigl( \frac{1}{n^{3}}\Bigr),
\end{align*}
where the last two inequalities are true because
$d> \frac{56\sigma_{r-2}^2({\bf \vec{n}})\sigma_2({\bf \vec{n}})m^2}
{\sigma_{r}^2({\bf \vec{n}})}$ and  $d>\log n$.
The proof is complete on noting that the event ``$\bf(a)$ and $\bf(b)$ hold''
is contained in the union of the events ``$\bf(a)$ holds''
and ``$\mathcal{D}$ doesn't hold''.
\end{proof}

From the proof of Lemma~\ref{l6.1}, we have $|\mathcal{H}_r^+({\bf \vec{n}},m)|\neq0$.
Hence, there exists a nonnegative integer $t$ such that the set of $k$-partite
$r$-graphs with exactly $t$ clusters in $\mathcal{H}_r^+({\bf \vec{n}},m)$
is nonempty and is denoted by $\mathcal{C}_{t}^{+}$. By the definition of
$\mathcal{H}_r^+({\bf \vec{n}},m)$ we have $|\mathcal{H}_r^+({\bf \vec{n}},m)|
=\sum_{t=0}^{M}|\mathcal{C}_{t}^{+}|$. By the switching operations below,
we will prove that $\mathcal{L}_r({\bf \vec{n}},m)
=\mathcal{C}_{0}^{+}\neq\emptyset$.
It follows that
\begin{align}\label{e3.11}
 \frac{1}{\mathbb{P}_r({\bf \vec{n}},m)}&=\Bigl(1-O\Bigl( \frac{m^2}{n^3}+
 \frac{m^3}{n^4}\Bigr)\Bigr)\sum_{t=0}^{M}
 \frac{|\mathcal{C}_{t}^{+}|}{|\mathcal{L}_r({\bf \vec{n}},m)|}
 =\Bigl(1-O\Bigl( \frac{m^2}{n^3}+ \frac{m^3}{n^4}\Bigr)\Bigr)\sum_{t=0}^{M}
\frac{|\mathcal{C}_{t}^{+}|}{|\mathcal{C}_{0}^{+}|}.
\end{align}

In order to find the ratio $ |\mathcal{C}_{t}^{+}|/|\mathcal{C}_{0}^{+}|$
when $1\leqslant t\leqslant M$, we design switchings to find a relationship between the sizes of
 $\mathcal{C}_{t}^{+}$ and $\mathcal{C}_{t-1}^{+}$. Let $H\in \mathcal{C}_{t}^{+}$.
  A {\it forward switching} from $H$ is used to reduce the number of clusters
  in $H$. Take any cluster consisting of two edges $e$ and $f$, and remove
  them from $H$. Define $H_0$ with the the same vertex set $[n]$ and
the edge set $E(H_0)=E(H)\setminus \{e,f\}$. Choose any $r$-set $e_1$ from
$ \mathcal{E}_r({\bf \vec{n}})$ such that $e_1$ does not share a link with any edge of $H_0$,
and define $H'$ by setting $E(H')=E(H_0)\cup \{e_1\}$.
Next, similarly choose another $r$-set $e_2$ from $ \mathcal{E}_r({\bf \vec{n}})$ such
that $e_2$ does not share a link with any edge of $H'$.
Add edge $e_2$ to $H'$
to produce $H''$, which is the result of the forward switching from $H$.
Note that the two edges $e_1$ and $e_2$ may have at most one vertex in common
and $H''\in \mathcal{C}_{t-1}^{+}$.

A {\it reverse switching} is the reverse of a  forward switching.
Let $H''\in \mathcal{C}_{t-1}^{+}$. Sequentially choose two edges $e_1$
and $e_2$ of $H''$ such that neither of them contains a link.
Define $H_0$ with the same vertex set $[n]$ and
$E(H_0)=E(H'')\setminus \{e_1,e_2\}$. Take two $r$-sets $e$ and $f$ in $ \mathcal{E}_r({\bf \vec{n}})$
such that $|e\cap f|=2$ and neither of them share a link with any edge of $H_0$.
Insert $e$ and $f$ into $H_0$. Call the resulting graph $H$. Then, $H\in \mathcal{C}_{t}^{+}$.

\begin{lemma}\label{l4.1}
For any fixed integer $r\geqslant 3$, let $k=k(n)$ and $m=m(n)$ be integers
with $r\leqslant k\leqslant n$ and $m=o(n^{ \frac{4}{3}})$. Let $t$ be some positive integer
with $1\leqslant t\leqslant M$.\\
$(a)$\ Let $H\in \mathcal{C}_{t}^{+}$. The number of  forward switchings for $H$ is
$t\sigma_r^2({\bf \vec{n}})\(1+O\( \frac{m}{n^{2}}\)\)$.

\noindent$(b)$\ Let $H''\in \mathcal{C}_{t-1}^{+}$. The number of
reverse switchings for $H''$ is
${m-2(t-1)\choose 2}\sigma_2({\bf \vec{n}})\sigma_{r-2}^2({\bf \vec{n}})
\(1+O\( \frac{1}{n}+ \frac{m}{n^{2}}\)\)$.
\end{lemma}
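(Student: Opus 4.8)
The plan is to count the two switching operations directly by multiplying the number of independent choices at each stage and then quantifying the error terms that arise from the "no shared link" constraints and from the overlap conditions. Both parts are exact-count-with-correction-factor arguments, and the recurring theme is that each constraint (avoiding a link, forbidding an accidental overlap) removes only a lower-order fraction of the available configurations, which gets absorbed into the $1+O(\cdot)$ factor.

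For part $(a)$, I would begin by noting that $H\in\mathcal{C}_t^+$ has exactly $t$ clusters, each consisting of two edges meeting in two vertices (by property $\bf(a)$). Choosing the cluster to remove gives a factor of $t$. After deleting $e$ and $f$, I must insert $e_1$ from $\mathcal{E}_r({\bf \vec{n}})$ avoiding a link with any edge of $H_0$; the total number of $r$-sets is $\sigma_r({\bf \vec{n}})$, and the number forbidden (those sharing a $2$-set with some existing edge) is bounded using Lemma~\ref{l2.4}-style counting: each of the $\approx m$ edges forbids at most $\binom{r}{2}\sigma_{r-2}({\bf \vec{n}})$ choices, and by Lemma~\ref{l2.3} this is $O(n^{-2})\sigma_r({\bf \vec{n}})$ per edge, hence $O(mn^{-2})\sigma_r({\bf \vec{n}})$ total. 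Thus $e_1$ has $\sigma_r({\bf \vec{n}})(1+O(m/n^2))$ valid choices. The same estimate applies to $e_2$ against $H'$, giving the product $t\,\sigma_r^2({\bf \vec{n}})(1+O(m/n^2))$ after combining the two correction factors.

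For part $(b)$, the counting mirrors part $(a)$ in reverse but has an extra subtlety in the ordered choice of $e_1,e_2$. Starting from $H''\in\mathcal{C}_{t-1}^+$, I select an ordered pair of link-free edges; since $H''$ has $m$ edges of which $2(t-1)$ lie in clusters (hence contain links), there are $m-2(t-1)$ link-free edges, and ordered pairs give the $\binom{m-2(t-1)}{2}$ factor after dividing by the $2!$ orderings. After removing $e_1,e_2$, I must insert a linked pair $\{e,f\}$ with $|e\cap f|=2$ into $H_0$ avoiding links with $H_0$. The count of such pairs is anchored by first choosing the shared $2$-set (in $\sigma_2({\bf \vec{n}})$ ways, since the two shared vertices must lie in distinct parts) and then independently extending to $e$ and $f$ by choosing the remaining $r-2$ vertices in $\sigma_{r-2}({\bf \vec{n}})$ ways each, yielding the leading factor $\sigma_2({\bf \vec{n}})\sigma_{r-2}^2({\bf \vec{n}})$. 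The correction $1+O(1/n+m/n^2)$ then accounts for two effects: the $O(m/n^2)$ from forbidding links with the existing $H_0$ edges (exactly as in part $(a)$), and an additional $O(1/n)$ from requiring that $e$ and $f$ overlap in \emph{exactly} the chosen $2$-set — i.e. subtracting off configurations where their remaining $r-2$ vertices accidentally coincide or where $e,f$ link each other in an unintended way, each such degeneracy costing a factor $O(1/n)$ by Lemma~\ref{l2.3}.

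The main obstacle will be bookkeeping the error terms cleanly rather than any single hard estimate: I must verify that the forbidden-configuration counts are genuinely lower order uniformly in $t\leqslant M$ (using $m=o(n^{4/3})$ to ensure $m/n^2=o(1)$), and I must be careful that in part $(b)$ the $O(1/n)$ term from the exact-overlap condition is not double-counted against the $O(m/n^2)$ link-avoidance term. The hypothesis $\sum_{i=1}^k \tfrac{1}{n_i}\leqslant C\tfrac{k^2}{n}$ enters only through Lemma~\ref{l2.3}, which is what converts every $\sigma_{r-j}({\bf \vec{n}})$ appearing in a forbidden count into $O(n^{-j})\sigma_r({\bf \vec{n}})$; once that lemma is invoked, all the estimates reduce to the same geometric bookkeeping as in the unpartitioned case of~\cite{mckay18}.
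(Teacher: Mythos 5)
Your proposal matches the paper's proof in all essentials: in part $(a)$ the factor $t$ for the cluster and the two link-avoiding insertions each contributing $\sigma_r({\bf \vec{n}})\bigl(1+O(m/n^{2})\bigr)$ via the bound $\binom{r}{2}m\,\sigma_{r-2}({\bf \vec{n}})=O(m/n^{2})\sigma_r({\bf \vec{n}})$ from Lemma~\ref{l2.3}, and in part $(b)$ the $\binom{m-2(t-1)}{2}$ choices of link-free edges, the leading count $\sigma_2({\bf \vec{n}})\sigma_{r-2}^2({\bf \vec{n}})$ for the inserted linked pair, the $O(1/n)$ correction for $|e\cap f|\geqslant 3$, and the $O(m/n^{2})$ correction for links with $H_0$ (which the paper handles by the case split $|\ell_1\cup\ell_2|=2,3,4$). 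The only cosmetic deviations are a passing reference to Lemma~\ref{l2.4} where Lemma~\ref{l2.3} is the relevant tool, and an ordered/unordered allocation of the two factors of $2$ in part $(b)$ opposite to the paper's, which nets out to the same product.
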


\begin{proof} $(a)$\  Let $H\in \mathcal{C}_{t}^{+}$. Let $\mathcal{R}(H)$ be the set of all
forward switchings which can be applied to~$H$. There are exactly $t$ ways to choose a
cluster; remove it from $H$ to produce $H_0$. The number of choices for the $r$-set
$e_1$ is at most $\sigma_r({\bf \vec{n}})$. From this we must  subtract
the number of $r$-sets that overlap some  edge of $H_0$ in two or more vertices,
which is at most $ \binom{r}{2}(m-2)\sigma_{r-2}({\bf \vec{n}})=
O( \frac{m}{n^{2}})\sigma_r({\bf \vec{n}})$  by Lemma~\ref{l2.3} and $r=O(1)$.
Thus, there are $\sigma_r({\bf \vec{n}})(1+O( \frac{m}{n^{2}}))$ ways to
choose $e_1$. Similarly, there are $\sigma_r({\bf \vec{n}})(1+O( \frac{m}{n^{2}}))$
ways to choose $e_2$. We have $|\mathcal{R}(H)|=t\sigma_r^2({\bf \vec{n}})(1+O( \frac{m}{n^{2}}))$.

$(b)$\ Conversely, suppose that $H''\in \mathcal{C}_{t-1}^{+}$.
Similarly, let $\mathcal{R}'(H'')$ be the set of all reverse switchings for $H''$.
There are exactly $2\binom{m-2(t-1)}{2}$ ways to delete two edges in sequence such that
neither of them contains a link in $H''$. Let the resulting graph be $H_0$.
There are at most $ \frac{1}{2}\sigma_2({\bf \vec{n}})\sigma_{r-2}^2({\bf \vec{n}})$
ways to choose two $r$-sets $e$ and $f$ in $ \mathcal{E}_r({\bf \vec{n}})$
such that $e$ and $f$ are linked edges. From this we firstly subtract the ones
with $|e\cap f|\geqslant 3$, which is at most
$\frac{r}{2}\sigma_2({\bf \vec{n}})\sigma_{r-2}
({\bf \vec{n}})\sigma_{r-3}({\bf \vec{n}})= \sigma_2({\bf \vec{n}})\sigma_{r-2}^2({\bf \vec{n}})
O( \frac{1}{n})$ by Lemma~\ref{l2.3}, since $r=O(1)$. Secondly, we subtract the
cases where at least one of $e,f$ (say $e$) shares a link 
with one of the $m-2$ edges $e'$ in $H_0$.
Let $\ell_1$ be the link shared by $e,f$ and $\ell_2$ be the link shared by $e,e'$.
The number of cases for $|\ell_1\cup\ell_2|=2,3,4$ is at most
$mr^2\sigma_{r-2}^2({\bf \vec{n}})$,
$mr^2 \sigma_1({\bf \vec{n}})\sigma_{r-3}({\bf \vec{n}})\sigma_{r-2}({\bf \vec{n}})$
and $mr^2\sigma_2({\bf \vec{n}})\sigma_{r-4}({\bf \vec{n}})\sigma_{r-2}({\bf \vec{n}})$,
respectively, where the last one is only possible if $r\geqslant 4$. By Lemma~\ref{l2.3}, each
of these expressions is $\sigma_2({\bf \vec{n}})\sigma_{r-2}^2({\bf \vec{n}})O( \frac{m}{n^{2}})$.
This completes the proof.
\end{proof}

\begin{corollary}\label{c4.2}
With notation as above, for some $1\leqslant t\leqslant M$,\\
$(a)$\ $|\mathcal{C}_{t}^{+}|>0$ iff $m\geqslant 2t$.\\
$(b)$\ Let $t'$ be the first value of $t\leqslant M$ such that $\mathcal{C}_{t}^{+}=\emptyset$,
or $t'=M+1$ if no such value exists. Then, as $n\rightarrow\infty$,
uniformly for $1\leqslant t< t'$,
\begin{align*}
 \frac{|\mathcal{C}_{t}^{+}|}{|\mathcal{C}_{t-1}^{+}|}&=
 \frac{ \binom{m-2(t-1)}{2}\sigma_2({\bf \vec{n}})\sigma_{r-2}^2({\bf \vec{n}})}
 {t\sigma_r^2({\bf \vec{n}})}\Bigl(1+O\Bigl( \frac{1}{n}+ \frac{m}{n^{2}}\Bigr)\Bigr).
 \end{align*}
\end{corollary}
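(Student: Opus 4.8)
The plan is to obtain both parts as consequences of Lemma~\ref{l4.1} together with the bijective pairing between forward and reverse switchings described above. For part $(a)$, necessity is immediate from property $\bf(a)$: if $H\in\mathcal{C}_t^+$ then each of its $t$ clusters consists of exactly two edges, and distinct clusters are edge-disjoint, so $H$ has at least $2t$ edges and hence $m\geq 2t$. For sufficiency I would induct on $t$. The base case $\mathcal{C}_0^+\neq\emptyset$ follows from $|\mathcal{H}_r^+({\bf \vec{n}},m)|\neq 0$: since $|\mathcal{H}_r^+({\bf \vec{n}},m)|=\sum_{t=0}^{M}|\mathcal{C}_t^+|$, some $\mathcal{C}_{t_0}^+$ is nonempty; taking $t_0$ minimal, if $t_0\geq 1$ then Lemma~\ref{l4.1}$(a)$ gives $t_0\sigma_r^2({\bf \vec{n}})(1+o(1))>0$ forward switchings from any $H\in\mathcal{C}_{t_0}^+$, each landing in $\mathcal{C}_{t_0-1}^+$ and contradicting minimality, so $t_0=0$. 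For the inductive step, if $\mathcal{C}_{s-1}^+\neq\emptyset$ and $m\geq 2s$, then $m-2(s-1)\geq 2$, hence $\binom{m-2(s-1)}{2}\geq 1$, and Lemma~\ref{l4.1}$(b)$ shows the number of reverse switchings from a member of $\mathcal{C}_{s-1}^+$ is positive for large $n$, producing a member of $\mathcal{C}_s^+$.

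For part $(b)$ I would use the switching double count. Each forward switching from an $H\in\mathcal{C}_t^+$ yields an $H''\in\mathcal{C}_{t-1}^+$ that is recovered by exactly one reverse switching from $H''$ (delete the inserted ordered pair $e_1,e_2$ and reinsert the removed cluster $\{e,f\}$); since both counts treat $\{e,f\}$ as an unordered pair and $(e_1,e_2)$ as ordered, forward and reverse operations are in bijection. Counting the resulting set of operation pairs in two ways gives $\sum_{H\in\mathcal{C}_t^+}|\mathcal{R}(H)|=\sum_{H''\in\mathcal{C}_{t-1}^+}|\mathcal{R}'(H'')|$, and inserting the uniform estimates of Lemma~\ref{l4.1} yields
\[
|\mathcal{C}_t^+|\,t\sigma_r^2({\bf \vec{n}})\Bigl(1+O\Bigl(\tfrac{m}{n^2}\Bigr)\Bigr)
=|\mathcal{C}_{t-1}^+|\binom{m-2(t-1)}{2}\sigma_2({\bf \vec{n}})\sigma_{r-2}^2({\bf \vec{n}})\Bigl(1+O\Bigl(\tfrac1n+\tfrac{m}{n^2}\Bigr)\Bigr).
\]
Solving for the ratio and using $(1+O(m/n^2))^{-1}=1+O(m/n^2)$, valid because $m/n^2\to 0$ when $m=o(n^{4/3})$, collapses the error to $1+O(1/n+m/n^2)$ and gives the stated formula. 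Uniformity over $1\leq t<t'$ is inherited from the uniform error bounds in Lemma~\ref{l4.1}, and the restriction $t<t'$ guarantees $\mathcal{C}_{t-1}^+\neq\emptyset$, so the denominator is nonzero.

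I expect the main obstacle to be verifying that the forward and reverse switchings are genuinely mutually inverse and stay within the prescribed classes---that every forward switching maps $\mathcal{C}_t^+$ into $\mathcal{C}_{t-1}^+$ and every reverse switching maps $\mathcal{C}_{t-1}^+$ into $\mathcal{C}_t^+$---so that the per-object counts of Lemma~\ref{l4.1} can be summed into an exact identity rather than a one-sided inequality. Once this pairing is secured, the remaining work is the routine arithmetic of dividing the two sides and simplifying the error term. A secondary point requiring care is the base case $\mathcal{C}_0^+\neq\emptyset$, which I settle by the minimality-plus-forward-switching argument above rather than by an explicit construction.
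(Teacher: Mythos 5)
Your proposal is correct and follows essentially the same route as the paper: necessity of $m\geqslant 2t$ from edge-disjointness of two-edge clusters, non-emptiness by walking from a nonempty $\mathcal{C}_{\hat t}^{+}$ via switchings whose counts in Lemma~\ref{l4.1} are positive, and the ratio in $(b)$ by the standard double count $\sum_{H\in\mathcal{C}_t^+}|\mathcal{R}(H)|=\sum_{H''\in\mathcal{C}_{t-1}^+}|\mathcal{R}'(H'')|$. You merely spell out details the paper leaves implicit (the ordered/unordered bookkeeping in the bijection and the explicit induction), so no substantive difference.
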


\begin{proof} $(a)$\ Firstly, $m\geqslant 2t$ is necessary for $|\mathcal{C}_{t}^{+}|>0$.
By Lemma~\ref{l6.1}, there is some $0\leqslant \hat{t}\leqslant M$ such that
$\mathcal{C}_{\hat{t}}^{+}\neq\emptyset$. We can move $\hat{t}$ to $t$ by a sequence of forward
and reverse switchings while no greater than $M$. Note that since the values
given in Lemma~\ref{l4.1} at each step of this path are positive for any
$0\leqslant t\leqslant M$, we have $|\mathcal{C}_{t}^{+}|>0$.

$(b)$\ By $(a)$,  if $\mathcal{C}_{t}^{+}=\emptyset$, then
$\mathcal{C}_{t+1}^{+},\ldots, \mathcal{C}_{M}^{+} =\emptyset$.
By the definition of $t'$, if $1\leqslant t<t'$, then the left hand ratio is well defined.
By Lemma~\ref{l4.1} completes the proof.
\end{proof}

At last, we estimate the sum $\sum_{t=0}^{M}
 {|\mathcal{C}_{t}^{+}|}/{|\mathcal{C}_0^+|}$
by applying Lemma~\ref{l2.6}, which is used to count
certain hypergraphs in~\cite{vlaejoc,valelec,hashe20,mckay18}.

\begin{lemma}\label{l4.3}
For any given integer $r\geqslant 3$, let $k=k(n)$ and $m=m(n)$ be integers
with $r\leqslant k\leqslant n$ and $m=o(n^{ \frac{4}{3}})$. With notation above,
as $n\rightarrow\infty$,
\begin{align*}
\sum_{t=0}^{M} \frac{|\mathcal{C}_{t}^{+}|}{|\mathcal{C}_{0}^{+}|}
&=\exp\biggl[ \frac{\sigma_2({\bf \vec{n}})\sigma_{r-2}^2({\bf \vec{n}})[m]_2}
{2\sigma_r^2({\bf \vec{n}})}+O\Bigl( \frac{m^2}{n^3}+ \frac{m^3}{n^4}\Bigr)\biggr].
\end{align*}
\end{lemma}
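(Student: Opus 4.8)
The plan is to apply Lemma~\ref{l2.6} to the sum $\sum_{t=0}^M h_t$ where $h_t = |\mathcal{C}_t^+|/|\mathcal{C}_0^+|$, so that the recursion of the $h_t$ matches the hypotheses of that corollary. First I would identify, from Corollary~\ref{c4.2}$(b)$, the quantities $A(t)$ and $B(t)$ so that $h_t/h_{t-1} = \frac{A(t)}{t}(1-(t-1)B(t))$. Writing $\binom{m-2(t-1)}{2} = \binom{m}{2}\bigl(1-(t-1)B(t)\bigr)$ with an appropriate $B(t)=O(1/m)$, the natural choice is
\[
A(t) = \frac{\sigma_2(\mathbf{\vec n})\sigma_{r-2}^2(\mathbf{\vec n})\binom{m}{2}}{\sigma_r^2(\mathbf{\vec n})}\bigl(1+O(\tfrac{1}{n}+\tfrac{m}{n^2})\bigr),
\]
which by Lemma~\ref{l2.3} is of order $\sigma_2\sigma_{r-2}^2 m^2/\sigma_r^2 = O(m^2/n^2)$. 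Setting $N=M$ and writing $\hat A = \sigma_2(\mathbf{\vec n})\sigma_{r-2}^2(\mathbf{\vec n})[m]_2/(2\sigma_r^2(\mathbf{\vec n}))$ for the target exponent, I would check that $A_1, A_2$ (the min and max of $A(t)$) both equal $2\hat A(1+O(\tfrac1n+\tfrac{m}{n^2}))$, and that $C_1,C_2 = A(t)B(t)$ are $O(A(t)/m) = O(m/n^2)$.

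The next step is to verify the smallness hypothesis of Lemma~\ref{l2.6}: I need a constant $\hat c < 1/3$ with $\max\{A/M, |C|\} \leqslant \hat c$. Since $A = O(m^2/n^2) = o(n^{2/3})$ while $M \geqslant \log n + \frac{56\sigma_{r-2}^2\sigma_2 m^2}{\sigma_r^2}$ dominates $A$ by construction (the factor $56$ is chosen precisely so $A/M$ is small), and $|C| = O(m/n^2) \to 0$, such a $\hat c$ exists; in fact one can take $\hat c$ as small as desired for large $n$, so the error term $(2e\hat c)^M$ is superpolynomially small and absorbed into the $O(\cdot)$. Then Lemma~\ref{l2.6} gives
\[
\exp\bigl[A_1 - \tfrac12 A_1 C_2\bigr] - (2e\hat c)^M \leqslant \sum_{t=0}^M h_t \leqslant \exp\bigl[A_2 - \tfrac12 A_2 C_1 + \tfrac12 A_2 C_1^2\bigr] + (2e\hat c)^M.
\]
Substituting $A_1 = A_2 = 2\hat A(1+O(\tfrac1n+\tfrac{m}{n^2}))$ and $C_1,C_2 = O(m/n^2)$, the leading exponent becomes $\hat A = \frac{\sigma_2\sigma_{r-2}^2[m]_2}{2\sigma_r^2}$, and I would track the correction terms: the $A\cdot(1+O(\tfrac1n+\tfrac{m}{n^2}))$ factor contributes $\hat A\cdot O(\tfrac1n+\tfrac{m}{n^2}) = O(\tfrac{m^2}{n^3}+\tfrac{m^3}{n^4})$ to the exponent, while the $-\frac12 A C$ term contributes $O(\hat A \cdot \tfrac{m}{n^2}) = O(\tfrac{m^3}{n^4})$, exactly matching the claimed error.

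The main obstacle I expect is the bookkeeping of the error terms, and in particular confirming that the combined multiplicative error $(1+O(\tfrac1n+\tfrac{m}{n^2}))$ on $A(t)$ — which enters $A_1,A_2$ — produces an additive exponent error no worse than $O(\tfrac{m^2}{n^3}+\tfrac{m^3}{n^4})$; this is where the precise orders $\hat A = O(m^2/n^2)$ and $m = o(n^{4/3})$ must be combined carefully so that $\hat A/n = O(m^2/n^3)$ and $\hat A \cdot m/n^2 = O(m^3/n^4)$. A secondary subtlety is that the sum need only run up to $t' - 1$ where $\mathcal{C}_{t'}^+ = \emptyset$ (Corollary~\ref{c4.2}), but since the Greenhill lemma's interpretation sets $h_j = 0$ once a factor vanishes, the truncation is automatically consistent with applying the lemma on the full range $0 \leqslant t \leqslant M$. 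Assembling these estimates yields the stated formula for $\sum_{t=0}^M |\mathcal{C}_t^+|/|\mathcal{C}_0^+|$.
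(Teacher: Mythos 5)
Your proposal follows essentially the same route as the paper: extract $A(t)$ and $B(t)$ from the switching ratio of Corollary~\ref{c4.2}, apply Lemma~\ref{l2.6} after checking $A=O(m^2/n^2)$, $C=O(m/n^2)$ and that the choice of $M$ (with the factor $56$) makes $A/M$ small, handle the truncation at $t'$ via the lemma's vanishing-factor convention, and track the error terms exactly as you describe. The only blemish is the stray factor of $2$ in your claim $A_1,A_2=2\hat A\bigl(1+O(\tfrac1n+\tfrac{m}{n^2})\bigr)$: since $\binom{m}{2}=[m]_2/2$ one in fact gets $A_1,A_2=\hat A\bigl(1+O(\tfrac1n+\tfrac{m}{n^2})\bigr)$, which is what your final substitution (leading exponent $\hat A$) implicitly uses.
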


\begin{proof} Let $t'$ be as  defined in Corollary~\ref{c4.2}\,$(b)$.
We know that  $|\mathcal{C}_{0}^{+}|\neq0$, then $t'\geqslant 1$. If
$t'=1$, then we have $m<2$ from Corollary~\ref{c4.2}\,$(a)$,  and
the conclusion is obviously true. In the following, suppose $t'\geqslant 2$.
Define $h_{0},\ldots,h_{M}$ by $h_{0}=1$, $h_{t}= |\mathcal{C}_{t}^{+}|/|\mathcal{C}_{0}^{+}|$
for $1\leqslant t<t'$ and $h_{t}=0$ for $t'\leqslant t\leqslant M$.
By Corollary~\ref{c4.2}\,$(b)$, we have for $1\leqslant t< t'$,
\begin{equation}\label{e6.3}
\begin{aligned}[b]
 \frac{h_{t}}{h_{t-1}}&= \frac{1}{t}
 \binom{m-2(t-1)}{2} \frac{\sigma_2({\bf \vec{n}})\sigma_{r-2}^2({\bf \vec{n}})}{\sigma_r^2({\bf \vec{n}})}
\Bigl(1+O\Bigl( \frac{1}{n}+ \frac{m}{n^2}\Bigr)\Bigr).
\end{aligned}
\end{equation}
For $1\leqslant t\leqslant M$, define
\begin{equation}\label{e6.2}
\begin{aligned}[b]
A(t)&=  \frac{\sigma_2({\bf \vec{n}})\sigma_{r-2}^2({\bf \vec{n}})[m]_2}{2\sigma_r^2({\bf \vec{n}})}
+O\Bigl( \frac{m^2}{n^3}+ \frac{m^3}{n^4}\Bigr),\\
B(t)&=\begin{cases} \frac{2(2m-2t+1)}{m(m-1)},\text{for}\ 1\leqslant t<t';\\
(t-1)^{-1},\text{otherwise}.\end{cases}
\end{aligned}
\end{equation}
Using the equations shown in~\eqref{e6.3} and~\eqref{e6.2}, for $1\leqslant t< t'$, we further have
\begin{align*}
 \frac{h_{t}}{h_{t-1}}&=
\frac{A(t)}{t}\bigl(1-(t-1)B(t)\bigr).
\end{align*}
Following the notation of Lemma~\ref{l2.6}, we also have
\begin{align}\label{e6.6}
A_1,A_2= \frac{\sigma_2({\bf \vec{n}})\sigma_{r-2}^2({\bf \vec{n}})[m]_2}{2\sigma_r^2({\bf \vec{n}})}
+O\Bigl( \frac{m^2}{n^3}+ \frac{m^3}{n^4}\Bigr).
\end{align} For $1\leqslant t< t'$, we have
\begin{align*}
A(t)B(t)&= \frac{\sigma_2({\bf \vec{n}})\sigma_{r-2}^2({\bf \vec{n}})
(2m-2t+1)}{\sigma_r^2({\bf \vec{n}})}
+O\Bigl( \frac{m}{n^3}+ \frac{m^2}{n^4}\Bigr).
\end{align*}Then $A(t)B(t)= O\bigl( \frac{m}{n^2}\bigr)$
because  ${\sigma_{r-2}^2({\bf \vec{n}})}/
{\sigma_r^2({\bf \vec{n}})}=O( {n^{-4}})$ by applying Lemma~\ref{l2.3},
and $\sigma_2({\bf \vec{n}})=O(n^2)$ based on the fact that $\sigma_2({\bf \vec{n}})$
is the number of edges of complete $k$-partite graphs and its maximum value
occurs at the approximately equal partition. For the case $t'\leqslant t\leqslant M$
and $t'\geqslant 2$, by Corollary~\ref{c4.2}\,$(a)$, we have $2\leqslant m<2t$,
and then $A(t)B(t)= O\bigl(\frac{m}{n^2}\bigr)$ for $t'\leqslant t\leqslant M$.
In both cases, following the notation of Lemma~\ref{l2.6}, we have
\begin{align}\label{e6.4}
C_1,C_2=O\Bigl(\frac{m}{n^2}\Bigr).
\end{align}
Then $|C|=o(1)$ for all $C\in[C_{1},C_{2}]$ when $m=o(n^{ \frac{4}{3}})$.

Let $\hat{c}=\frac{1}{110}$. Note that $M=\bigl\lceil\log n+
\frac{56\sigma_{r-2}^2({\bf \vec{n}})\sigma_2({\bf \vec{n}})m^2}
{\sigma_{r}^2({\bf \vec{n}})}\bigr\rceil$. We have $ \frac{A}{M}\leqslant \frac{1+ O( 1/n+ m/n^2)}{112}$
for all $A\in [A_1,A_2]$ as shown in~\eqref{e6.6}. Thus, $\max\{ \frac{A}{M},|C|\}< \hat{c}$ and
$(2e\hat{c})^{M}=O( \frac{1}{n^3})$ as $n\rightarrow\infty$. Using the equations
shown in~\eqref{e6.6} and~\eqref{e6.4}, we have
\begin{align*}
A_1C_2,A_2C_1=O\Bigl( \frac{m^2}{n^2}\cdot \frac{m}{n^2}\Bigr)=
O\Bigl( \frac{m^3}{n^4}\Bigr) \ \text{and}\ A_2C_1^2=
O\Bigl( \frac{m^2}{n^2}\cdot \frac{m^2}{n^4}\Bigr)=
O\Bigl( \frac{m^4}{n^6}\Bigr).
\end{align*}
Lemma~\ref{l2.6} applies to obtain
\begin{align*}
\sum_{t=0}^{M} \frac{|\mathcal{C}_{t}^{+}|}{|\mathcal{C}_{0}^{+}|}
&=\exp\Bigl[ \frac{\sigma_2({\bf \vec{n}})
\sigma_{r-2}^2({\bf \vec{n}})[m]_2}{2\sigma_r^2({\bf \vec{n}})}+
O\Bigl( \frac{m^{2}}{n^3}+ \frac{m^{3}}{n^4}\Bigr)\Bigr]+ O\Bigl( \frac{1}{n^3}\Bigr)\\
&=\exp\Bigl[ \frac{\sigma_2({\bf \vec{n}})
\sigma_{r-2}^2({\bf \vec{n}})[m]_2}{2\sigma_r^2({\bf \vec{n}})}+
O\Bigl( \frac{m^{2}}{n^3}+ \frac{m^{3}}{n^4}\Bigr)\Bigr]
\end{align*}when $m=o(n^{ \frac{4}{3}})$.
\end{proof}

\begin{proof} [Proof of Theorem~\ref{t1.7} ]By applying Lemma~\ref{l4.3},
using the equations shown in~\eqref{e3.1} and~\eqref{e3.11}, we have
\begin{align*}
|\mathcal{L}_r({\bf \vec{n}},m)|&=\binom{\sigma_{r}({\bf \vec{n}})}{m}
\exp\Bigl[ -\frac{\sigma_2({\bf \vec{n}})\sigma_{r-2}^2({\bf \vec{n}})[m]_2}{2\sigma_r^2({\bf \vec{n}})}+
O\Bigl( \frac{m^{2}}{n^3}+ \frac{m^{3}}{n^4}\Bigr)\Bigr]\\
&= \frac{\sigma_{r}^m({\bf \vec{n}})}{m!} \exp\Bigl[ -\frac{\sigma_2({\bf \vec{n}})
\sigma_{r-2}^2({\bf \vec{n}})[m]_2}{2\sigma_r^2({\bf \vec{n}})}+
O\Bigl( \frac{m^{2}}{n^3}+ \frac{m^{3}}{n^4}\Bigr)\Bigr],
\end{align*}
since
\begin{align*}
\binom{\sigma_{r}({\bf \vec{n}})}{m}= \frac{\sigma_{r}^m({\bf \vec{n}})}{m!}
\exp\Bigl[O\Bigl( \frac{m^{2}}{\sigma_{r}({\bf \vec{n}})}\Bigr)\Bigr]
= \frac{\sigma_{r}^m({\bf \vec{n}})}{m!}\exp\Bigl[O\Bigl( \frac{m^{2}}{n^3}\Bigr)\Bigr]
\end{align*}
because $\sigma_{r}({\bf \vec{n}})\geqslant \binom{k}{r}c^r ( \frac{n}{k})^r$ and $r\geqslant 3$.
\end{proof}
\begin{proof}[Proof of Corollary~\ref{c11.2}]
If  $k=n$, then $\sigma_{2}({\bf \vec{n}})= N_2$, $\sigma_{r-2}({\bf \vec{n}})= N_{r-2}$,
$\sigma_{r}({\bf \vec{n}})= N_r$ and $k$-partite $r$-graphs are $r$-graphs.
By applying Theorem~\ref{t1.7}, it follows that
\begin{align*}
|\mathcal{L}_r(n,m)|
={ \frac{ N_r^m}{m!}}
\exp\Bigl[- \frac{[r]_2^2[m]_2}{4n^2}+O\Bigl( \frac{m^2}{n^{3}}+ \frac{m^3}{n^{4}}\Bigr)\Bigr]
\end{align*}
when $m=o(n^{ \frac{4}{3}})$.
\end{proof}

\begin{remark}
The formula in Corollary~\ref{c11.2} coincides with the uniform case in~\cite{hashe20}
and is a subcase in~\cite{mckay18}. Compared with the enumeration formula of
$|\mathcal{L}_r(n,m)|$ in~\cite{mckay18},
\begin{align*}
|\mathcal{L}_r(n,m)|
&={ \frac{ N_r^m}{m!}}\exp\biggl[- \frac{[r]_2^2[m]_2}{4n^2}-
\frac{[r]_2^3(3r^2-15r+20)m^3}{24n^4}+O\Bigl( \frac{m^2}{n^3}\Bigr)\biggr],
\end{align*}
 the
 term $\frac{[r]_2^3(3r^2-15r+20)m^3}{24n^4}=O\bigl(\frac{m^3}{n^4}\bigr)$ is under
 our conditions.
It will take new ideas to handle larger $m$ in
$\mathcal{L}_r({\bf \vec{n}},m)$ and they will be more complicated
than those in~\cite{hashe20,mckay18}.
We leave these problems for future work.
\end{remark}

\section*{Acknowledgement}

Most  of this work was finished  when Fang Tian was a visiting research fellow at
Australian National University. She is very grateful for what she learned there.
She is also immensely grateful to the anonymous reviewer for his/her detailed and
helpful suggestions.

\end{document}